\theoremstyle{plain}
\newtheorem{theorem}{Theorem}[section]
\newtheorem{lemma}[theorem]{Lemma}
\theoremstyle{definition}
\theoremstyle{remark}
\renewcommand{\thefootnote}{\arabic{footnote}}
\def\R{\mathbb R}
\def\al{\alpha}
\def\ge{\geq}
\def\de{\delta}
\def\si{\sigma}
\def\na{\nabla}
\def\lt{\left}
\def\rt{\right}
\def\i0i{\int_0^\infty}
\def\Vol{\text{Vol}}
\def\B{\mathbb B}
\def\H{\mathbb H}
\numberwithin{equation}{section}
\title{The sharp Poincar\'e--Sobolev type inequalities in the hyperbolic spaces $\H^n$}
\author{Van Hoang Nguyen\footnote{Institute of Research and Development, Duy Tan University, Da Nang, Vietnam}
}
\begin{document}
\maketitle



\renewcommand{\thefootnote}{}

\footnote{Email: \href{mail to: Van Hoang Nguyen <vanhoang0610@yahoo.com>}{vanhoang0610@yahoo.com}}

\footnote{2010 \emph{Mathematics Subject Classification\text}: 26D10, 46E35}

\footnote{\emph{Key words and phrases\text}: Poincar\'e--Sobolev inequality, Poincar\'e Gagliardo--Nirenberg inequality, Poincar\'e Morrey--Sobolev inequality, sharp constant, hyperbolic spaces}

\renewcommand{\thefootnote}{\arabic{footnote}}
\setcounter{footnote}{0}

\begin{abstract}
In this note, we establish a $L^p-$version of the Poincar\'e--Sobolev inequalities in the hyperbolic spaces $\mathbb H^n$. The interest of this result is that it relates both the Poincar\'e (or Hardy) inequality and the Sobolev inequality with the sharp constant in $\mathbb H^n$. Our approach is based on the comparison of the $L^p-$norm of gradient of the symmetric decreasing rearrangement of a function in both the hyperbolic space and the Euclidean space, and the sharp Sobolev inequalities in Euclidean spaces. This approach also gives the proof of the Poincar\'e--Gagliardo--Nirenberg and Poincar\'e--Morrey--Sobolev inequalities in the hyperbolic spaces $\mathbb H^n$. Finally, we discuss several other Sobolev inequalities in the hyperbolic spaces $\H^n$ which generalize the inequalities due to Mugelli and Talenti in $\mathbb H^2$.  
\end{abstract}

\section{Introduction}
Given $n\geq 2$, let $\H^n$ denote the hyperbolic space of dimension $n$. We will use the Poincar\'e ball model for the hyperbolic space $\H^n$, i.e., a unit ball $\B^n$ with center at origin of $\R^n$ equipped with the metric $g(x) = \frac{4}{(1-|x|^2)^2} \sum_{i=1}^n d x_i^2$. The corresponding Riemannian volume element is $dV = (\frac{2}{1-|x|^2})^n dx$ and for a measurable set $E\subset \H^n$, we denote by $V(E) = \int_E dV$. Our main result of this note states as follows.

\begin{theorem}\label{Mainresult}
Let $n \geq 4$ and $\frac{2n}{n-1} \leq p < n$. Then for any $u\in W^{1,p}(\H^n)$ it holds
\begin{equation}\label{eq:LpHSM}
\int_{\B^n} |\na_g u|_g^p dV - \lt(\frac{n-1}p\rt)^p\int_{\B^n} |u|^p dV \geq S(n,p)^p \lt(\int_{\B^n} |u|^{\frac{np}{n-p}} dV\rt)^{\frac{n-p}n},
\end{equation}
where $\na_g = (\frac{1-|x|^2}2)^2 \na $ denotes the hyperbolic gradient, $|\na_g u|_ g = \sqrt{g(\na_g u, \na_g u)}$ and $S(n,p)$ is the best constant in the $L^p-$Sobolev inequality in $\R^n$ (see, e.g., \cite{Aubin,Talentia}). Furthermore, equality holds true in \eqref{eq:LpHSM} if and only if $u\equiv 0$.
\end{theorem}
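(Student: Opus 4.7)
The plan is to exploit hyperbolic symmetrization, transplant the radial profile onto $\R^n$ via an equimeasurable change of radii, and close the inequality using the sharp Euclidean $L^p$-Sobolev inequality with constant $S(n,p)$.

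First, let $u^*$ denote the symmetric decreasing rearrangement of $u$ on $(\H^n,dV)$. The hyperbolic P\'olya--Szeg\H{o} inequality gives $\int|\na_g u^*|_g^p\,dV\le\int|\na_g u|_g^p\,dV$ while all $L^q$-norms are preserved, so we may assume $u=u(r)$ is a radial non-increasing function of the geodesic distance $r$ to the origin. Next, define the Euclidean companion $v:\R^n\to\R$ radially by $v(\rho)=u(r)$, where $\rho$ and $r$ are linked by the volume-preserving relation $\om_n\rho^n=V(B_\H(0,r))$, equivalently $\rho^n=n\int_0^r\sinh^{n-1}(s)\,ds$. Since $\cosh\ge 1$, one has $\sinh^n(r)=n\int_0^r\sinh^{n-1}(s)\cosh(s)\,ds\ge\rho^n$, hence $\rho\le\sinh(r)$; moreover $v$ is equimeasurable with $u$, so $\int_{\R^n}|v|^q\,dx=\int_{\B^n}|u|^q\,dV$ for every $q>0$.

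The core of the argument is the sharp gradient comparison
\[
\int_{\B^n}|\na_g u|_g^p\,dV-\lt(\frac{n-1}p\rt)^p\int_{\B^n}|u|^p\,dV\ \ge\ \int_{\R^n}|\na v|^p\,dx.
\]
Using $v'(\rho)=u'(r)\,\rho^{n-1}/\sinh^{n-1}(r)$ together with the radial volume formulas on $\H^n$ and $\R^n$, this reduces to the one-dimensional weighted inequality
\[
\int_0^\infty|u'(r)|^p\sinh^{(1-p)(n-1)}(r)\bigl[\sinh^{p(n-1)}(r)-\rho(r)^{p(n-1)}\bigr]dr\ \ge\ \lt(\frac{n-1}p\rt)^p\int_0^\infty|u(r)|^p\sinh^{n-1}(r)\,dr,
\]
whose weight on the left is non-negative by $\rho\le\sinh(r)$. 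To prove this I would express $|u(r)|^p=p\int_r^\infty |u(s)|^{p-1}|u'(s)|\,ds$, swap the order of integration by Fubini so that the right-hand side becomes $p\int_0^\infty |u(s)|^{p-1}|u'(s)|\,\rho(s)^n/n\,ds$, and then close with H\"older's inequality with exponents $(p,p')$. A careful analysis of the resulting weight $\rho^{np/(p-1)}[\sinh^{p(n-1)}-\rho^{p(n-1)}]^{-1/(p-1)}$ (which near $r=0$ behaves like $r^{(p-2)/(p-1)}$ and is bounded at infinity) is needed to identify the optimal constant as exactly $((n-1)/p)^p$; the hypothesis $p\ge 2n/(n-1)$ is expected to enter precisely here, in order to ensure the Muckenhoupt-type ratio between the two weights produces the correct coefficient.

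Once this gradient comparison is in hand, the sharp Euclidean $L^p$-Sobolev inequality applied to $v$, combined with the equimeasurability recorded above, immediately yields \eqref{eq:LpHSM}. For the rigidity statement, equality in the Euclidean Sobolev inequality forces $v$ to be an Aubin--Talenti bubble, but such a bubble cannot simultaneously saturate the strict Hardy-type bound above unless $u\equiv 0$. The principal obstacle is the 1-D Hardy-type step: pinning down the sharp constant with the delicate weight $\sinh^{(1-p)(n-1)}[\sinh^{p(n-1)}-\rho^{p(n-1)}]$ is where the real work lies, and this is also the place from which the dimensional restriction $p\ge 2n/(n-1)$ naturally emerges.
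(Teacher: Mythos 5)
Your overall architecture is the same as the paper's: symmetrize in $\H^n$, transplant the radial profile to $\R^n$ by the volume-preserving change of radii, prove the gradient comparison
\[
\int_{\B^n}|\na_g u|_g^p\,dV-\Bigl(\tfrac{n-1}{p}\Bigr)^p\int_{\B^n}|u|^p\,dV\ \ge\ \int_{\R^n}|\na v|^p\,dx,
\]
and finish with the sharp Euclidean Sobolev inequality. Your reduction to the one-dimensional weighted inequality is correct, and your Fubini--H\"older scheme for it is a legitimate alternative to the paper's route (the paper works in the volume variable $s=V(B_g(0,r))$, substitutes $w(s)=v(s)s^{1/p}$, and uses the elementary inequality $|a-b|^p\ge|a|^p+|b|^p-pab^{p-1}$). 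However, if you trace your H\"older step through, closing it with the constant $\bigl(\tfrac{n-1}{p}\bigr)^p$ requires exactly the pointwise estimate
\[
\sinh^{p(n-1)}(r)-\rho(r)^{p(n-1)}\ \ge\ \Bigl(\tfrac{n-1}{n}\Bigr)^p\rho(r)^{np},\qquad r\ge 0,
\]
and this is precisely the content of the paper's Lemma 2.1 (stated there for $k_{n,p}(s)=(\sinh\Phi^{-1}(s))^{p(n-1)}-s^{p(n-1)/n}$ with $s=\rho^n$). This is the heart of the theorem: it is the \emph{only} place the hypothesis $p\ge\frac{2n}{n-1}$ is used, its proof in the paper is a nontrivial two-stage monotonicity argument (differentiating $F_{n,p}$ and then $G_{n,p}$, and invoking the bounds $\Phi(t)<(\sinh t)^n$ and $\Phi(t)<\frac{n}{n-1}(\sinh t)^{n-1}$), and the threshold $\frac{2n}{n-1}$ is shown to be sharp by an asymptotic expansion at infinity. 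Your proposal only asserts that ``a careful analysis is needed'' and that the hypothesis ``is expected to enter precisely here''; that is a statement of the problem, not a proof, so the proposal has a genuine gap at its central step.

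Two smaller points. First, your equality discussion is also incomplete: arguing that an Aubin--Talenti bubble ``cannot simultaneously saturate the strict Hardy-type bound'' would itself require proof, whereas the paper's argument is cleaner -- equality forces the Hardy remainder $\int_0^\infty|(v(s)s^{1/p})'|^ps^{p-1}\,ds$ to vanish, hence $v(s)=cs^{-1/p}$, and integrability of $v^p$ forces $c=0$. Second, a sign/monotonicity check is needed in your Fubini step ($|u|^p(r)=p\int_r^\infty|u|^{p-1}|u'|\,ds$ uses that the rearranged profile is non-increasing and vanishes at infinity); this is fine after symmetrization but should be said.
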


The most interest of the inequality \eqref{eq:LpHSM} is that it connects both the sharp Poincar\'e (or Hardy) inequality and the sharp Sobolev inequality in the hyperbolic space $\H^n$. Let $n \geq 2$ and $p>1$, the sharp Poincar\'e inequality asserts that
\begin{equation}\label{eq:HardyPoincare}
\int_{\B^n} |\na_g u|_g^p dV \geq \lt(\frac{n-1}p\rt)^p \int_{\B^n} |u|^p dV,\quad u\in C^{\infty}_0(\B^n).
\end{equation}
The constant $(\frac{n-1}p)^p$ is sharp and is never attained. This leaves a room for several improvements of the inequality \eqref{eq:HardyPoincare}. Notice that the non achievement of sharp constant does not always imply improvement (e. g., Hardy operator in the Euclidean space $\mathbb R^n, n\geq 2$). However, in the hyperbolic space, the operator $-\Delta_{p,\mathbb H^n} - (\frac{n-1}p)^p = {\rm div}(|\nabla_g \cdot|^{p-2}_g \nabla_g \cdot ) - (\frac{n-1}p)^p$ is subcritical, hence improvement is possible.  For examples, the reader can consult the papers \cite{BDGG,BGG,BG} for the improvements of \eqref{eq:HardyPoincare} by adding the remainder terms concerning to Hardy weights, i.e., the inequalities of the form
\[
\int_{\B^n} |\na_g u|_g^p dV - \lt(\frac{n-1}p\rt)^p \int_{\B^n} |u|^p dV \geq C \int_{\B^n} W |u|^p dV,
\]
for some constant $C >0$ and the weight $W$ satisfying some appropriate conditions. For the case $p=2$, Mancini and Sandeep \cite{MS} proved the following Poincar\'e--Sobolev inequalities in $\H^n$ with $n\geq 3$
\begin{equation}\label{eq:MSHn}
\int_{\B^n} |\na_g u|_g^2 dV - \frac{(n-1)^2}4\int_{\B^n} |u|^2 dV \geq C \lt(\int_{\B^n} |u|^q dV\rt)^{\frac2q},\quad u\in C_0^\infty(\H^n),
\end{equation}
where $2 < q \leq \frac{2n}{n-2}$ and $C$ is constant. The inequality \eqref{eq:MSHn} is equivalent to the Hardy--Sobolev--Maz'ya inequality on the half spaces (see \cite[Section $2.1.6$]{Maz'ya}). Especially, in the case $q =\frac{2n}{n-2}$, we get 
\begin{equation}\label{eq:MSHn2}
\int_{\B^n} |\na_g u|_g^2 dV - \frac{(n-1)^2}4\int_{\B^n} |u|^2 dV \geq C_n \lt(\int_{\B^n} |u|^{\frac{2n}{n-2}} dV\rt)^{\frac{n-2}n},\quad u\in C_0^\infty(\H^n),
\end{equation}
where $C_n$ denotes the sharp constant for which \eqref{eq:MSHn2} holds. It was shown by Tertikas and Tintarev \cite{TT} that if $n\geq 4$ then $C_n$ is attained. Using test function, they show that $C_n < S(n,2)$ where $S(n,2)$ denotes the sharp constant in the $L^2-$Sobolev inequality in $\R^n$. More surprisingly, Benguria, Frank and Loss \cite{BFL} proved that $C_3 = S(3,2)$ and $C_3$ is not attained. The non achievement of $C_3$ was also proved by Sandeep ad Mancini \cite{MS} by a different method. We refer the reader to \cite{LuYang} for the Hardy--Sobolev--Maz'ya inequalities of kind \eqref{eq:MSHn2} for higher order derivatives. Therefore, the inequality \eqref{eq:LpHSM} can be seen as a $L^p$ analogue of the result of Benguria, Frank and Loss on the Hardy--Sobolev--Maz'ya inequality in $\H^3$. 

On the other hand, the inequality \eqref{eq:LpHSM} can be seen as a concrete example in the hyperbolic space of the AB program on the sharp Sobolev inequality in Riemannian manifolds \cite{DH}. Let $(M,g)$ be a complete Riemannian manifold of dimension $n\geq 2$. We denote by $H^{1,p}(M)$ the completion of $C_0^\infty(M)$ under the norm $\|u\|_{H^{1,p}} = (\|\na u\|_{L^p(M)}^p + \|u\|_{L^p(M)}^p)^{1/p}$. We wonder to know that for $\theta \in [1,p]$, is there a constant $B$ such that 
\begin{equation*}
S(n,p)^{\theta}\|u\|_{L^{p^*}(M)}^\theta \leq  \|\na u\|_{L^p(M)}^{\theta} + B \|u\|_{L^p(M)}^\theta \tag{$I_{p,opt}^\theta$}
\end{equation*}
for any $u \in H^{1,p}(M)$? In the case of complete compact Riemannian manifolds, it was proved by Hebey and Vaugon \cite{HVa,HVb}, by Druet \cite{Druet} and by Aubin and Li \cite{AL} that $(I_{p,opt}^{\theta})$ holds for $\theta =\min\{2,p\}$. This solves a long standing conjecture due to Aubin \cite{Aubin}. We refer the reader to the original article by Aubin \cite{Aubin} or to the book by Hebey \cite{Hebey} or the paper by Druet and Hebey \cite{DH} for a complete survey on the compact Riemannian manifolds. In the case of complete non-compact Riemannian manifolds, there is several results in which $(I_{p,opt}^\theta)$ is valid. For example, Aubin, Druet and Hebey \cite{ADH} proved that $(I_{p,opt}^p)$ holds for any $1\leq p < n$ with $B = 0$ on the Cartan--Hadamard manifolds (i.e., complete simply connected Riemannian manifold) satisfying Cartan--Hadamard conjecture. In particular, $(I_{p,opt}^p)$ is valid in the hyperbolic spaces for any $1\leq p < n$. Since the inequality \eqref{eq:LpHSM} relates both the sharp Poincar\'e and sharp Sobolev inequalities, then the constants in \eqref{eq:LpHSM} are sharp and can not be improved. Hence, the \eqref{eq:LpHSM} gives an example in which the sharp second constant $B$ can be explicitly computed. We refer to \cite[Theorem $7.7$]{Hebey} for some other examples in the case $p=2$. Note that, in the hyperbolic space $\H^n$, the following inequality holds
\begin{equation}\label{eq:SobHn}
S(n,2)^2 \lt(\int_{\H^n} |u|^{\frac{2n}{n-2}} dV\rt)^{\frac{n-2}n} \leq \int_{\H^n} |\na_g u|_g^2 dV - \frac{n(n-2)}4 \int_{\H^n} |u|^2 dV.
\end{equation}
The constant $n(n-2)/4$ is sharp when $n\geq 4$. By the result of Benguria, Frank and Loss, this constant is not sharp when $n=3$. In this case, the sharp costant is $(n-1)^2/4 =1$. By this observation, we can not hope the valid of \eqref{eq:LpHSM} for any $p\in [1,n)$. We will see below that \eqref{eq:LpHSM} follows by a pointwise estimate for which the condition $\frac{2n}{n-1} \leq p < n$ is sharp. However, in the case $n=3$, we have $\frac{2n}{n-1} =3 > 2$. Hence, the condition $p\geq \frac{2n}{n-1}$ maybe is not optimal for the valid of \eqref{eq:LpHSM}. So, it is more interesting if we can find the sharp $p_0\in [1,n)$ such that \eqref{eq:LpHSM} holds for $p\in [p_0,n)$. 

Let us explain briefly the method used in the proof of Theorem \ref{Mainresult}. Our proof lies heavily on the symmetric non-increasing rearrangement arguments. More precisely, for any function $u\in W^{1,p}(\H^n)$ we define a function $u^*$ which is non-increasing rearrangement function of $u$ (see the precise definition in Section $2$ below). From this $u^*$ we define two new functions $u^\sharp_g$ on $\H^n$ and $u^\sharp_e$ on $\R^n$ by $u^\sharp_g(x) = u^*(V(B_g(0,\rho(x)))),\quad x\in \B^n$ where $ \rho(x) = \ln \frac{1+|x|}{1-|x|}$ denotes the geodesic distance from $x$ to $0$, and $B_g(0,r)$ denotes the open geodesic ball center at $0$ and radius $r>0$ in $\H^n$, and $
u^\sharp_e(x) = u^*(\si_n |x|^n), \quad x\in \R^n$ where $\si_n$ denotes the volume of unit ball in $\R^n$, respectively. The functions $u^\sharp_g$ and $u^\sharp_e$ has the same decreasing rearrangement function (which is $u^*$), then $\|u^\sharp_g\|_{L^q(\H^n)} = \|u^\sharp_e\|_{L^q(\R^n)} = \|u\|_{L^q(\H^n)}$ for any $q \geq 1$. The key in our proof is a result which compares $\|\na_g u_g^\sharp\|_{L^p(\H^n)}^p$ and $\|\na u_e^\sharp\|_{L^p(\R^n)}^p$. Indeed, we will show that
\[
\|\na_g u_g^\sharp\|_{L^p(\H^n)}^p -\|\na u_e^\sharp\|_{L^p(\R^n)}^p \ge\lt(\frac{n-1}p\rt)^p \|u^\sharp_g\|_{L^p(\H^n)}^p. 
\]
Using the sharp Sobolev inequality in $\R^n$ and the P\'olya--Szeg\"o principle in $\H^n$, we obtain the inequality \eqref{eq:LpHSM}. 

The approach to prove Theorem \ref{Mainresult} above also yields the proofs for the following Poincar\'e--Gagliardo--Nirenberg and Poincar\'e--Morrey--Sobolev inequalities in the hyperbolic space $\H^n$,

\begin{theorem}\label{Maintheorem}
Let $n \geq 4$, $\frac{2n}{n-1} \leq p < n$ and $\al \in (0, \frac{n}{n-p}],$ $\al \not=1$. Then for any $u \in C_0^\infty(\H^n)$, the following inequalities holds.
\item (i) If $\al > 1$, then we have
\begin{equation}\label{eq:GN1}
 \|u\|_{L^{\al p}(\H^n)} \leq GN(n,p,\al)\lt(\|\na_g u\|_{L^p(\H^n)}^p -\lt(\frac{n-1}p\rt)^p \|u\|_{L^p(\H^n)}^p\rt)^{\frac{\theta}p} \|u\|_{L^{\al(p-1)+ 1}(\H^n)}^{1-\theta},
\end{equation}
with $\theta = \frac{n(\al-1)}{\al(np - (\al p+ 1-\al)(n-p))}$.
\item (ii) If $\al \in (0,1)$, then we have
\begin{equation}\label{eq:GN2}
 \|u\|_{L^{\al(p-1)+ 1}(\H^n)} \leq GN(n,p,\al)\lt(\|\na_g u\|_{L^p(\H^n)}^p -\lt(\frac{n-1}p\rt)^p \|u\|_{L^p(\H^n)}^p\rt)^{\frac{\theta}p} \|u\|_{L^{\al p}(\H^n)}^{1-\theta},
\end{equation}
with $\theta =\frac{n(1-\al)}{(\al p + 1 -\al)(n-\al(n-p))}.$ 

The constant $G(n,p,\al)$ which appears in \eqref{eq:GN1} and \eqref{eq:GN2} denotes the sharp constant in the Gagliardo--Nirenberg inequality in $\R^n$ (see, e.g, \cite{DDa,DDb,CNV}).

Suppose that $n \geq 2$ and $p > n$. Then for any function $u \in C_0^\infty(\H^n)$, it holds
\begin{equation}\label{eq:HMS}
\|u\|_\infty^p \leq b_{n,p}^p V(\text{\rm supp }u)^{\frac{p-n}n} \lt(\int_{\B^n} |\na_g u|_g^p dV - \lt(\frac{n-1}p\rt)^p\int_{\B^n} |u|^p dV\rt)
\end{equation}
where $\text{\rm supp } u$ denotes the support of the function $u$, and $b_{n,p}$ is the sharp constant in the Morrey--Sobolev inequality in $\R^n$ (see, e.g., \cite{Talentib}).
\end{theorem}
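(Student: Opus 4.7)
The plan is to carry the symmetric rearrangement argument behind Theorem~\ref{Mainresult} through to both cases (i)--(ii) and (iii), substituting the sharp Euclidean Gagliardo--Nirenberg (resp.\ Morrey--Sobolev) inequality for the sharp Euclidean Sobolev inequality. Given $u\in C_0^\infty(\H^n)$, I would produce from its decreasing rearrangement $u^*$ the companion radial functions $u^\sharp_g$ on $\H^n$ and $u^\sharp_e$ on $\R^n$ exactly as in the introduction, and then rely on three inputs already supplied by the machinery of Theorem~\ref{Mainresult}: (a) equimeasurability $\|u\|_{L^q(\H^n)} = \|u^\sharp_g\|_{L^q(\H^n)} = \|u^\sharp_e\|_{L^q(\R^n)}$ for every $q\in[1,\infty]$; (b) the P\'olya--Szeg\"o principle $\|\na_g u\|_{L^p(\H^n)}^p \geq \|\na_g u^\sharp_g\|_{L^p(\H^n)}^p$; and (c) the gradient comparison
\[
\|\na_g u^\sharp_g\|_{L^p(\H^n)}^p - \lt(\frac{n-1}{p}\rt)^p \|u^\sharp_g\|_{L^p(\H^n)}^p \geq \|\na u^\sharp_e\|_{L^p(\R^n)}^p.
\]

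For (i)--(ii), I would apply the sharp Euclidean Gagliardo--Nirenberg inequality with constant $GN(n,p,\al)$ to the radial function $u^\sharp_e\in W^{1,p}(\R^n)$. The Euclidean gradient factor is then bounded from above by the $\theta/p$-th power of (c); the $L^{\al p}$ and $L^{\al(p-1)+1}$ norms are translated back to $\H^n$ via (a); and a final application of (b) replaces $u^\sharp_g$ by $u$ in the gradient term, yielding \eqref{eq:GN1} and \eqref{eq:GN2} with the sharp Euclidean constant. For (iii), the new observation is that the support of $u^\sharp_e$ has Euclidean Lebesgue measure equal to $V(\text{supp}\,u)$: since $u^*$ vanishes on $[V(\text{supp}\,u),\infty)$, the function $u^\sharp_e(x)=u^*(\si_n|x|^n)$ vanishes whenever $\si_n|x|^n\geq V(\text{supp}\,u)$, and the volume of this Euclidean ball is exactly $V(\text{supp}\,u)$. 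The sharp Euclidean Morrey--Sobolev inequality applied to $u^\sharp_e$, followed by invocation of (c), (b) and the identity $\|u\|_\infty=\|u^\sharp_e\|_\infty$ from (a), then delivers \eqref{eq:HMS}.

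The principal obstacle I foresee is establishing (c) in the Morrey--Sobolev regime $p>n$, since the discussion in the introduction flagged $\frac{2n}{n-1}\leq p<n$ as the natural range for the underlying pointwise estimate. I would return to the one-variable inequality on the geodesic profile $f(\rho)=u^*(V(B_g(0,\rho)))$, rewrite both sides of (c) as one-dimensional integrals in the geodesic radius, and check directly that the pointwise comparison between $\sinh^{n-1}(\rho)$ and its Euclidean counterpart survives once the Poincar\'e weight $(\frac{n-1}{p})^p$ is incorporated. The inequality $\frac{n-1}{p}<1$, automatic when $p>n$, should make the algebraic verification straightforward. Once (c) is available in each relevant regime, the reduction above for (i), (ii) and (iii) is then routine.
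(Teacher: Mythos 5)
Your proposal is correct and follows essentially the same route as the paper: rearrange, apply the sharp Euclidean Gagliardo--Nirenberg (resp.\ Morrey--Sobolev) inequality to $u^\sharp_e$, and transfer back via equimeasurability, the P\'olya--Szeg\"o principle, and the gradient comparison of Theorem \ref{sosanh}. The only superfluous step is the obstacle you foresee for $p>n$: the comparison (c) requires only the lower bound $p\geq 2$ (for $n=2$) or $p\geq \tfrac{2n}{n-1}$ (for $n\geq 3$), with no upper restriction on $p$ in Lemma \ref{lowerboundk} or Theorem \ref{sosanh} (the condition $p<n$ enters Theorem \ref{Mainresult} only through the Sobolev exponent), and since $p>n$ forces $p\geq \tfrac{2n}{n-1}$ when $n\geq 3$ and $p>2$ when $n=2$, (c) holds verbatim and no new one-dimensional verification is needed.
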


Similar to \eqref{eq:LpHSM}, the inequalities \eqref{eq:GN1}, \eqref{eq:GN2} and \eqref{eq:HMS} relate both the sharp Poincar\'e inequality and the sharp Gagliardo--Nirenberg and the sharp Morrey--Sobolev inequalities in the hyperbolic spaces $\H^n$, so they can not be improved on the constants. The inequality \eqref{eq:LpHSM} is a special case of \eqref{eq:GN1} with $\al = \frac{n}{n-p}$. The case $p=n$ is not included in Theorems \ref{Mainresult} and \ref{Maintheorem}. In this situation, there are some Hardy--Moser--Trudinger type inequalities (see, e.g., \cite{WY2012,Nguyen2017,Nguyen2017a,MST2013,LY}). We refer the reader to the papers \cite{CMa,CMb,CMc,CMd} for more information about the Gagliardo--Nirenberg inequality in the compact Riemannian manifolds.

The rest of this paper is organized as follows. In Section $2$, we recall some basic facts about the symmetric decreasing rearrangement of function in the hyperbolic space $\H^n$ and prove an important result relating the symmetric decreasing rearrangement of function both in hyperbolic space and Euclidean space (see Theorem \ref{sosanh} below). Section $3$ is devoted to prove Theorems \ref{Mainresult} and \ref{Maintheorem}. In section $4$, we discuss some related Sobolev inequalities in hyperbolic space which generalize the inequalities due to Mugelli and Talenti in $\H^2$ to higher dimension.

\section{Symmetric decreasing rearrangements}
It is now known that the symmetrization argument works well in the setting of the hyperbolic spaces $\H^n$ (see, e.g., \cite{Ba} for a reference on this technique). Let us recall some facts about the rearrangement in the hyperbolic spaces. Let $u: \H^n \to \R$ be a function such that
\[
\Vol_g(\{x\in \H^n\,:\, |u(x)|> t\}) = \int_{\{x\in \H^n\,:\, |u(x)|> t\}} dV < \infty,\quad \forall\, t>0.
\]
For such a function $u$, its distribution function, denoted by $\mu_u$, is defined by
\[
\mu_u(t) = V\{x\in \H^n\, :\, |u(x)| > t\}, \qquad t >0.
\]
The function $(0,\infty)\ni t\mapsto \mu_u(t)$ is non-increasing and right-continuous. Then the decreasing rearrangement function $u^*$ of $u$ is defined by
\[
u^*(t) = \sup\{s >0\, :\, \mu_u(s) > t\}.
\]
Note that the function $(0,\infty) \ni t \to u^*(t)$ is non-increasing. We now define the symmetric decreasing rearrangement function $u_g^\sharp$ of $u$ by
\begin{equation}\label{eq:usharpg}
u^\sharp_g(x) = u^*(V(B_g(0,\rho(x)))),\quad x \in \B^n.
\end{equation}
We also define a function $u^\sharp_e$ on $\R^n$ by
\begin{equation}\label{eq:usharpe}
u^\sharp_e(x) = u^*(\si_n |x|^n),\quad x\in \R^n,
\end{equation}
where $\si_n$ denotes the volume of unit ball in $\R^n$. Since $u$, $u_g^\sharp$ and $u^\sharp_e$ has the same non-increasing rearrangement function (which is $u^*$), then we have
\begin{equation}\label{eq:equalkey}
\int_{\B^n} \Phi(|u|) dV = \int_{\B^n} \Phi(u^\sharp_g) dV = \int_{\R^n} \Phi(u_e^\sharp) dx = \int_0^\infty \Phi(u^*(t)) dt,
\end{equation}
for any increasing function $\Phi: [0,\infty) \to [0,\infty)$ with $\Phi(0) =0$. This equality is a consequence of layer cake representation. Moreover, by P\'olya--Szeg\"o principle, we have
\begin{equation}\label{eq:PSprinciple}
\int_{\B^n} |\na_g u^\sharp_g|_g^p dV \leq \int_{\B^n} |\na_g u|_g^p dV.
\end{equation}

Our next aim is to compare $\|\na_g u_g^\sharp\|_{L^p(\H^n)}^p$ and $\|\na u_e^\sharp\|_{L^p(\R^n)}^p$. For simplifying notation, we denote $v =u^*$. By a straightforward computation, we have
\begin{equation}\label{eq:eulideangra}
\int_{\R^n} |\na u^\sharp_e|^p dx = (n\si_n)^p \int_0^\infty |v'(s)|^p \lt(\frac{s}{\si_n}\rt)^{\frac{(n-1)p}n} ds.
\end{equation}
Note that
\[
V(B(0,\rho(x))) = n\si_n \int_0^{\rho(x)} (\sinh t)^{n-1} dt = \si_n \Phi(\rho(x)), 
\]
where
\begin{equation}\label{eq:Phi}
\Phi(t) = n \int_0^t (\sinh s)^{n-1} ds.
\end{equation}
Note that the function $\Phi: [0,\infty) \to [0,\infty)$ is a diffeomorphism, strictly increasing with $\Phi(0) =0$ and $\lim_{t\to \infty} \Phi(t) = \infty$. The gradient of $V(B_g(0,\rho(x)))$ is then given by
\begin{equation*}
\na_g V(B(x_0,\rho(x))) = n\si_n (\sinh \rho(x))^{n-1}\, \na_g \rho(x).
\end{equation*}
Since $|\na_g \rho(x)|_g =1$ for $x \not=0$, then we get
\begin{align*}
\int_{\B^n} |\na_g u^\sharp_g(x)|_g^p dV &= \int_{\B^n} |v'(V(B_g(0,\rho(x))))|^p\lt(n (\sinh (\rho(x))^{n-1}\rt)^p dV\\
&= n \si_n \int_0^\infty  |v'(V(B_g(0,t)))|^p \lt(n\si_n (\sinh t)^{n-1}\rt)^p (\sinh t)^{n-1} dt\\
&=(n\si_n)^{p} \int_0^\infty |v'(V(B_g(0,t)))|^p \lt(\sinh t\rt)^{p(n-1)} n\si_n (\sinh t)^{n-1}dt.
\end{align*}
Making the change of variable $s = V(B_g(0,t)) = \si_n \Phi(t)$ or $t = \Phi^{-1}(\frac{s}{\si_n})$, we have $ds = n\si_n (\sinh t)^{n-1} dt$ and 
\begin{equation}\label{eq:gradientHn}
\int_{\B^n} |\na_g u^\sharp_g|_g^p dV = (n\si_n)^p \int_0^\infty |v'(s)|^p \lt(\sinh \Phi^{-1}\lt(\frac{s}{\si_n}\rt)\rt)^{p(n-1)} ds.
\end{equation}
Let us define the function $k_{n,p}$ on $[0,\infty)$ by
\[
k_{n,p}(s) = (\sinh \Phi^{-1}(s))^{p(n-1)} - s^{\frac{p(n-1)}n}.
\]
We then obtain from \eqref{eq:eulideangra} and \eqref{eq:gradientHn} that
\begin{equation}\label{eq:equalnorm}
\int_{\B^n} |\na_g u^\sharp_g|_g^p dV = \int_{\R^n} |\na u^\sharp_e|^p dx + (n\si_n)^p \int_0^\infty |v'(s)|^p k_{n,p}\lt(\frac s{\si_n}\rt) ds.
\end{equation}
To proceed, we next find an estimate for $k_{n,p}$ from below. In fact, we have the following results.
\begin{lemma}\label{lowerboundk}
It holds
\begin{equation}\label{eq:lowerboundk}
k_{n,p}(s) \geq \lt(\frac{n-1}n\rt)^p s^p,\qquad s \geq 0,
\end{equation}
for any $p \geq 2$ if $n =2$, and for any $p \geq \frac{2n}{n-1}$ if $n\geq 3$.
\end{lemma}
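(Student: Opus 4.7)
The plan is to substitute $t=\Phi^{-1}(s)$ so the inequality takes the form
\[
(\sinh t)^{p(n-1)}\geq\Phi(t)^{(n-1)p/n}+\alpha^p\Phi(t)^p,\qquad t\geq0,\quad \alpha:=\frac{n-1}{n}.
\]
Dividing by $(\sinh t)^{p(n-1)}$ and setting $a(t)=[\Phi(t)/(\sinh t)^n]^{(n-1)/n}$ and $b(t)=\alpha\Phi(t)/(\sinh t)^{n-1}$, this becomes the compact inequality $a^p+b^p\leq 1$. The strategy is to prove this at a critical exponent $p_*$ and then lift it: once $a^{p_*}+b^{p_*}\leq1$ is known, both $a,b$ automatically lie in $[0,1]$ and $a^p+b^p\leq a^{p_*}+b^{p_*}\leq 1$ for every $p\geq p_*$.

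When $n=2$ I would verify the critical case $p_*=2$ by direct calculation: since $\Phi(t)=4\sinh^2(t/2)$, one finds $a=\mathrm{sech}(t/2)$ and $b=\tanh(t/2)$, so the hyperbolic identity $\mathrm{sech}^2+\tanh^2=1$ is exactly $a^2+b^2=1$ and the full range $p\geq 2$ follows.

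When $n\geq3$ I would take $p_*=p_0:=2n/(n-1)$, at which $(n-1)p_0/n=2$, so the inequality to prove reads
\[
G(t):=(\sinh t)^{2n}-\Phi(t)^2-\alpha^{p_0}\Phi(t)^{p_0}\geq 0.
\]
Since $G(0)=0$, I would show $G'\geq 0$ by a short cascade of derivatives, each satisfying $F(0)=0$ and $F'\geq 0$ because of the preceding one being non-negative. A first differentiation (using $\Phi'=n(\sinh t)^{n-1}$ and $\tfrac{p_0}{2}\alpha^{p_0}=\alpha^{p_0-1}$) yields $G'=2n(\sinh t)^{n-1}H$ with $H(t)=(\sinh t)^n\cosh t-\Phi(t)-\alpha^{p_0-1}\Phi(t)^{p_0-1}$ and $H(0)=0$. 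Differentiating once more, and invoking the algebraic identity $(p_0-1)n\alpha^{p_0-1}=(n+1)\alpha^{p_0-2}$ that is specific to $p_0=2n/(n-1)$, the expression telescopes to
\[
H'(t)=(n+1)(\sinh t)^{n-1}\bigl[(\sinh t)^2-(\alpha\Phi(t))^{2/(n-1)}\bigr],
\]
reducing everything to the single pointwise bound $(\sinh t)^{n-1}\geq\alpha\Phi(t)$. This last is elementary: setting $P(t)=(\sinh t)^{n-1}-\alpha\Phi(t)$ one has $P(0)=0$ and $P'(t)=(n-1)(\sinh t)^{n-2}(\cosh t-\sinh t)=(n-1)(\sinh t)^{n-2}e^{-t}\geq 0$.

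The main obstacle is engineering the two differentiations in the $n\geq3$ argument so that their coefficients collapse via the identity $(p_0-1)n\alpha^{p_0-1}=(n+1)\alpha^{p_0-2}$; the choice $p_0=2n/(n-1)$ is forced by exactly this collapse, which explains the threshold appearing in the statement---for $n\geq3$ and any $p$ strictly smaller than $p_0$ the reduced pointwise inequality would be strictly stronger than $(\sinh t)^{n-1}\geq\alpha\Phi(t)$ and would in fact fail for moderate $t$.
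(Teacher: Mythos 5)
Your proof is correct, and while it shares the paper's basic engine (substitute $s=\Phi(t)$ and run a cascade of derivatives down to an elementary comparison between $\Phi(t)$ and powers of $\sinh t$), it is organized genuinely differently. The paper keeps $p$ general throughout: after two differentiations it splits $G_{n,p}'$ into two bracketed terms, one controlled by $\Phi(t)<\tfrac{n}{n-1}(\sinh t)^{n-1}$ (needing $p\ge 2$) and one by $\Phi(t)<(\sinh t)^{n}$ (needing $\tfrac{p(n-1)}{n}\ge 2$, which is where the threshold $p\ge \tfrac{2n}{n-1}$ enters). You instead normalize by $(\sinh t)^{p(n-1)}$ to recast the claim as $a(t)^p+b(t)^p\le 1$ with $a,b$ independent of $p$, prove it only at the critical exponent $p_0$ (where your two coefficient identities make the derivative cascade telescope onto the single bound $(\sinh t)^{n-1}\ge\tfrac{n-1}{n}\Phi(t)$), and then lift to all $p\ge p_0$ for free since $a,b\le 1$. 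This buys a shorter computation requiring only one of the paper's two elementary estimates on $\Phi$, and it makes the $n=2$ case transparent: there $a^2+b^2\equiv 1$ exactly via $\mathrm{sech}^2+\tanh^2=1$, consistent with $k_{2,2}(s)=s^2/4$. I verified the identities you invoke, namely $\tfrac{p_0}{2}\alpha^{p_0}=\alpha^{p_0-1}$ and $(p_0-1)n\alpha^{p_0-1}=(n+1)\alpha^{p_0-2}$, as well as $P'(t)=(n-1)(\sinh t)^{n-2}e^{-t}\ge 0$; all are correct. The only caveat is your closing remark: for $n\ge 3$ and $p<p_0$ the reversed inequality manifests for \emph{large} $t$ (as the paper's asymptotic expansion of $F_{n,p}$ shows), not for moderate $t$, but that comment is motivation and lies outside the proof proper.
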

\begin{proof}
It is enough to prove that
\begin{equation}\label{eq:enough}
F_{n,p}(t) = k_{n,p}(\Phi(t)) - \lt(\frac{n-1}n\rt)^p (\Phi(t))^p \geq 0,\qquad t \geq 0,
\end{equation}
for any $p \geq 2$ if $n =2$, and for any $p \geq \frac{2n}{n-1}$ if $n\geq 3$.

If $n =2$, we have $\Phi(t) = 2 (\cosh t-1)$, and
\[
F_{2,p}(t) = \lt(\frac{\Phi(t)^2}4 + \Phi(t)\rt)^{\frac p2} - \Phi(t)^{\frac p2} -\frac1{2^p} \Phi(t)^p\geq 0,
\]
for any $t \geq 0$ if $p \geq 2$.

Suppose that $n \geq 3$. Differentiating the function $F_{n,p}$ we get
\begin{align*}
F_{n,p}'(t) &= p(n-1) (\sinh t)^{p(n-1)-1} \cosh t - p(n-1) (\sinh t)^{n-1} \Phi(t)^{\frac{p(n-1)}n-1}\\
&\hspace{1cm} -\lt(\frac{n-1}n\rt)^p pn (\sinh t)^{n-1} \Phi(t)^{p-1}\\
&= p(n-1) (\sinh t)^{n-1}\lt((\sinh t)^{p(n-1)-n} \cosh t - \Phi(t)^{\frac{p(n-1)}n-1}-\lt(\frac{n-1}n\rt)^p \Phi(t)^{p-1}\rt)\\
&=: p(n-1) (\sinh t)^{n-1} G_{n,p}(t).
\end{align*}
We continue differentiating the function $G_{n,p}$ to obtain
\begin{align*}
G_{n,p}'(t)&= (p(n-1)-n)(\sinh t)^{p(n-1)-n-1} (\cosh t)^2 + (\sinh t)^{(p-1)(n-1)} \\
&\quad -(p(n-1)-n) (\sinh t)^{n-1}\Phi(t)^{\frac{p(n-1)}n-2} -\lt(\frac{n-1}n\rt)^{p-1} (p-1)n (\sinh t)^{n-1} \Phi(t)^{p-2}.
\end{align*}
Replacing $(\cosh t)^2$ by $1 + (\sinh t)^2$, we simplify the expression of $G_{n,p}'$ as
\begin{align*}
G_{n,p}'(t) &= (p-1)(n-1) (\sinh t)^{(p-1)(n-1)} + (p(n-1)-n)(\sinh t)^{p(n-1)-n-1}\\
&\quad -(p(n-1)-n) (\sinh t)^{n-1}\Phi(t)^{\frac{p(n-1)}n-2} -\lt(\frac{n-1}n\rt)^{p-1} (p-1)n (\sinh t)^{n-1} \Phi(t)^{p-2}\\
&= (p-1)(n-1) (\sinh t)^{n-1} \lt((\sinh t)^{(p-2)(n-1)} -\lt(\frac{n-1}n\rt)^{p-2} \Phi(t)^{p-2} \rt)\\
&\quad + (p(n-1)-n)(\sinh t)^{n-1} \lt((\sinh t)^{p(n-1)-2n} - \Phi(t)^{\frac{p(n-1)}n-2}\rt).
\end{align*}
It is easy to see that
\[
\Phi(t) = n\int_0^t (\sinh s)^{n-1} ds < n\int_0^t (\sinh s)^{n-1}\cosh s\, ds =(\sinh t)^n,\quad t >0,
\]
and
\[
\Phi(t) = n\int_0^t (\sinh s)^{n-1} ds < n\int_0^t (\sinh s)^{n-2}\cosh s\, ds = \frac{n}{n-1}(\sinh t)^{n-1},\quad t >0.
\]
Plugging these previous estimates into the expression of $G_{n,p}'$, we get that $G_{n,p}'(t) > 0$ for any $t >0$. This implies $G_{n,p}(t) > G_{n,p}(0) = 0$ for any $t>0$, or equivalently $F_{n,p}'(t) >0$ for any $t>0$. Consequently, $F_{n,p}(t) >F(0) =0$ for any $t>0$. This completes our proof.
\end{proof}
It is remarkable that the pointwise estimate \eqref{eq:lowerboundk} is sharp in $p$. Indeed, if $n =2$ and $p \in (1,2)$ then a reversed estimate of \eqref{eq:lowerboundk} holds. Suppose that $n \geq 3$ and $p < \frac{2n}{n-1}$ we next show that a reversed estimate of \eqref{eq:lowerboundk} holds for $s$ large enough. Indeed, suppose that $n \geq 4$, we have
\[
\Phi(t) = n2^{1-n} \int_0^t \lt(e^s - e^{-s}\rt)^{n-1} ds = \frac{n2^{1-n}}{n-1} e^{(n-1)t} \lt(1 -\frac{(n-1)^2}{n-3} e^{-2t} + o(e^{-2t})\rt),
\]
as $t \to \infty$. Consequently
\[
\Phi(t)^{\frac{p(n-1)}n} = \lt(\frac{n2^{1-n}}{n-1}\rt)^{\frac{p(n-1)}n} e^{\frac{p(n-1)^2}n t} \lt(1 - \frac{p(n-1)^3}{n(n-3)} e^{-2t} + o(e^{-2t})\rt),
\]
and
\[
\Phi(t)^p =\lt(\frac{n2^{1-n}}{n-1}\rt)^{p} e^{p(n-1) t} \lt(1 - \frac{p(n-1)^2}{n-3} e^{-2t} + o(e^{-2t})\rt)
\]
as $t \to \infty$. Note that
\[
(\sinh t)^{p(n-1)} = 2^{p(1-n)} e^{p(n-1)t} \lt(1 -p(n-1) e^{-2t} + o(e^{-2t})\rt),
\]
as $t \to \infty$. Therefore
\begin{align*}
F_{n,p}(t)&=(\sinh t)^{p(n-1)}- \Phi(t)^{\frac{p(n-1)}n} -\lt(\frac{n-1}n\rt)^p \Phi(t)^p\\
& =2^{p(1-n)} e^{p(n-1)t-2t}\lt(\frac{2p(n-1)}{n-3} - \lt(\frac{n2^{1-n}}{n-1}\rt)^{\frac{p(n-1)}n} e^{2t-\frac{p(n-1)}n t} + o(1)\rt)
\end{align*}
as $t \to \infty$. If $p < \frac{2n}{n-1}$  we then have $\frac{p(n-1)}n < 2$, and $F_{n,p}(t) < 0$ for $t >0$ large enough. Suppose $n=3$, we have
\[
\Phi(t) = \frac38 (e^{2t} -e^{-2t} -4t) = \frac38 e^{2t}(1 -4t e^{-2t} + o(t e^{-2t})),
\]
as $t\to \infty$. Hence
\[
\Phi(t)^p = \frac{3^p}{8^p} e^{2pt} \lt(1 -4pt e^{-2t} + o(e^{-2t}t)\rt),
\]
and
\[
\Phi(t)^{\frac{2p}3} =\lt(\frac38\rt)^{\frac{2p}3} e^{\frac{4p}3t} \lt(1 - \frac{8pt}3 e^{-2t} + o(te^{-2t})\rt),
\]
as $t \to \infty$. Evidently,
\[
(\sinh t)^{2p} = 2^{-2p} e^{2pt} (1 + o(t e^{-2t})),
\]
as $t \to \infty$. Consequently, we get
\begin{align*}
F_{3,p}(t)
&= \frac1{4^p}e^{2(p-1)t}t\lt(4p -4^p\lt(\frac38\rt)^{\frac{2p}3} t^{-1} e^{2t-\frac{2p}3t} +o(1)\rt),
\end{align*}
as $t\to \infty$. Since $p < 3$, then we have $2 - \frac{2p}3 >0$ and hence $F_{3,p}(t) <0$ for $t >0$ large enough.

Combining \eqref{eq:lowerboundk} and \eqref{eq:equalnorm} together, we arrive
\begin{equation}\label{eq:equalnorma}
\int_{\B^n} |\na_g u^\sharp_g|_g^p dV \geq \int_{\R^n} |\na u^\sharp_e|^p dx + (n-1)^p \int_0^\infty |v'(s)|^p s^p ds.
\end{equation}
Making the change of function $w(s) = v(s) s^{\frac1p}$ or equivalently $v(s) = w(s) s^{-\frac1p}$. Differentiating the function $v$, we have
\[
v'(s) = w'(s) s^{-\frac1p} -\frac1p w(s) s^{-\frac1p -1}.
\]
We can readily check that if $a-b \leq 0$, $b\geq 0$ and $p\geq 2$ then 
\[
|a -b|^p \geq |a|^p + |b|^p - p a b^{p-1}
\]
Since $v' \leq 0$, applying the previous inequality we get
\begin{align*}
\int_0^\infty |v'(s)|^p s^p ds &\geq \int_0^\infty |w'(s)|^p s^{p-1} ds + \frac1{p^p} \int_0^\infty w(s)^p s^{-1} ds  -p^{2-p}\int_0^\infty w'(s) w(s)^{p-1} ds\\
&= \int_0^\infty |w'(s)|^p s^{p-1} ds + \frac1{p^p} \int_0^\infty v(s)^p ds,
\end{align*}
here we use integration by parts. Plugging this estimate into \eqref{eq:equalnorma} we get
\begin{align}\label{eq:equalnormb}
\int_{\B^n} |\na_g u^\sharp_g|_g^p dV &\geq \int_{\R^n} |\na u^\sharp_e|^p dx + \frac{(n-1)^p}{p^p} \int_0^\infty |v|^p ds  + (n-1)^p\int_0^\infty |(v(s) s^{\frac1p})'|^p s^{p-1} ds.
\end{align}
Since $v =u^*$ is non-increasing rearrangement function of $u_g^\sharp$, then 
\begin{equation}\label{eq:aa}
\int_0^\infty |v|^p ds = \int_{\B^n} |u_g^\sharp|^p dV.
\end{equation}
Plugging \eqref{eq:aa} into \eqref{eq:equalnormb}, we obtain the main result of this section as follows,

\begin{theorem}\label{sosanh}
Let $p\geq 2$ if $n =2$ and $p\geq \frac{2n}{n-1}$ if $n \geq 3$. It holds
\begin{equation}\label{eq:keysosanh}
\int_{\B^n} |\na_g u^\sharp_g|_g^p dV - \lt(\frac{n-1}p\rt)^p\int_{\B^n} |u_g^\sharp|^p dV \geq \int_{\R^n} |\na u^\sharp_e|^p dx.
\end{equation}
\end{theorem}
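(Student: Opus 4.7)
The plan is to bundle together the chain of estimates already obtained in this section. The starting point is the identity \eqref{eq:equalnorm} together with the kernel bound in Lemma \ref{lowerboundk}, which combine directly to give \eqref{eq:equalnorma}:
\[
\int_{\B^n} |\na_g u^\sharp_g|_g^p\, dV \ge \int_{\R^n} |\na u^\sharp_e|^p\, dx + (n-1)^p \int_0^\infty |v'(s)|^p s^p\, ds,
\]
with $v = u^*$. Thus the statement reduces to the one-dimensional weighted Hardy-type inequality
\[
\int_0^\infty |v'(s)|^p s^p\, ds \ge \frac{1}{p^p} \int_0^\infty v(s)^p\, ds
\]
for the non-increasing function $v = u^*$, which would supply exactly the Poincar\'e constant $(\frac{n-1}{p})^p$ against $\int_{\B^n}|u_g^\sharp|^p dV$ after invoking the equimeasurability identity \eqref{eq:aa}.

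For the Hardy step, my strategy is the change of unknown $w(s) = v(s) s^{1/p}$, which isolates the Hardy weight: one has $-v'(s) = -w'(s) s^{-1/p} + p^{-1} w(s) s^{-1/p-1}$, and this quantity is $\ge 0$ because $v$ is non-increasing. I would then apply the pointwise algebraic inequality $|a-b|^p \ge |a|^p + |b|^p - p a b^{p-1}$, which is valid for $p \ge 2$ whenever $a - b \le 0$ and $b \ge 0$, to the choices $a = -w'(s) s^{-1/p}$ and $b = p^{-1} w(s) s^{-1/p-1}$. Integrating in $s$, the $|b|^p$ contribution yields the desired $p^{-p} \int v^p\, ds$; the cross term reduces via $\int w'(s) w(s)^{p-1}\, ds = p^{-1}[w^p]_0^\infty$ and vanishes by integration by parts once boundary behavior is checked; and the surviving term $\int |w'(s)|^p s^{p-1}\, ds$ is non-negative and may be discarded. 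Plugging this Hardy bound into \eqref{eq:equalnorma} gives \eqref{eq:equalnormb}, and using \eqref{eq:aa} to rewrite $\int_0^\infty v^p\, ds = \int_{\B^n} |u_g^\sharp|^p\, dV$ yields \eqref{eq:keysosanh}.

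The principal obstacle is the algebraic inequality step: the bound $|a-b|^p \ge |a|^p + |b|^p - pab^{p-1}$ is valid precisely for $p \ge 2$, which is compatible with the hypothesis threshold $p \ge 2n/(n-1) \ge 2$ already imposed by Lemma \ref{lowerboundk}. A secondary concern is the vanishing of the boundary terms $[w^p]_0^\infty = [v(s)^p s]_0^\infty$ in the integration by parts; for $u \in W^{1,p}(\H^n)$ this follows from the decay of $u^*$ at infinity together with the fact that $u^* \in L^p$, possibly after a routine density/truncation argument approximating $u$ by compactly supported functions.
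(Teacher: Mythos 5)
Your proposal is correct and follows essentially the same route as the paper: identity \eqref{eq:equalnorm} plus Lemma \ref{lowerboundk} to reach \eqref{eq:equalnorma}, then the substitution $w(s)=v(s)s^{1/p}$, the elementary inequality $|a-b|^p\ge |a|^p+|b|^p-pab^{p-1}$ for $p\ge 2$, and integration by parts to kill the cross term. The only (harmless) slip is in the bookkeeping of signs: one should take $a=w'(s)s^{-1/p}$ and $b=p^{-1}w(s)s^{-1/p-1}$ so that $a-b=v'(s)\le 0$, rather than the choices you wrote, for which $a-b$ equals neither $v'$ nor $-v'$.
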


Theorem \ref{sosanh} was proved in \cite{Nguyen2017a} in the case $p=n$ as a key to establish several improved Moser--Trudinger type inequalities in the hyperbolic space.


\section{Proof of Theorems \ref{Mainresult} and \ref{Maintheorem}}
In this section, we provide the proof of Theorem \ref{Mainresult} and Theorem \ref{Maintheorem}. Our proof uses Theorem \ref{sosanh} above and the known inequalities in the Euclidean spaces such as the sharp Sobolev, Gagliardo--Nirenberg and Morrey--Sobolev inequalities. Let us recall them here. The sharp Sobolev inequality in the euclidean space was independently proved by Aubin and Talenti \cite{Aubin,Talentia} and has the form
\begin{equation}\label{eq:sharpSobolev}
S(n,p) \|u\|_{L^{p^*}(\R^n)} \leq \|\na u\|_{L^p(\R^n)},\quad u \in C_0^\infty(\R^n),
\end{equation}
for $p \in (1,n)$, $p^* = \frac{np}{n-p}$ and the sharp constant $S(n,p)$ is given by
\[
S(n,p) =\lt[\frac1n \lt(\frac{n(p-1)}{n-p}\rt)^{1 -\frac1p} \lt(\frac{\Gamma(n)}{\Gamma(\frac np)\Gamma(n+1-\frac np) \si_n}\rt)^{\frac1n}\rt]^{-1},
\]
where $\Gamma(x) = \int_0^\infty t^{x-1} e^{-t} dt, x >0$ denotes the usual Gamma function. The family of extremal functions is determined uniquely by the function $u(x) = (1+ |x|^{\frac p{p-1}})^{-\frac{n-p}p}$ up to a translation, diltation and multiplying by constant. 

Let $p \in (1,n)$ and $\al \in (0, \frac{n}{n-p}],$ $\al\not=1$. The sharp Gagliardo--Nirenberg inequalities in $\R^n$ was established by Del Pino and Dolbeault \cite{DDa,DDb} and has the forms:
\begin{description}
\item (i) for $\al > 1$,
\begin{equation}\label{eq:sharpGN}
\|u\|_{L^{\al p}(\R^n)} \leq GN(n,p,\al) \|\na u\|_{L^p(\R^n)}^{\theta} \|u\|_{L^{\al(p-1) +1}(\R^n)}^{1-\theta},\quad u\in C^\infty_0(\R^n),
\end{equation}
with $\theta =\frac{n(\al-1)}{\al(np - (\al p+ 1-\al)(n-p)}$, the sharp constant $GN(n,p,\al)$ is given by
\[
GN(n,p,\al) =\lt(\frac{q-p}{p\sqrt{\pi}}\rt)^{\theta} \lt(\frac{pq}{n(q-p)}\rt)^{\frac\theta p} \lt(\frac{\de}{pq}\rt)^{\frac1{\al p}} \lt(\frac{\Gamma(q\frac{p-1}{q-p}) \Gamma(\frac{n}2+1)}{\Gamma(\frac{p-1}{p} \frac{\de}{q-p})\Gamma(n\frac{p-1}p+1)}\rt)^{\frac{\theta}n},
\]
with $q = \al(p-1)+1$, $\de = np - (n-p)q$, and an extremal functions is given the function $u(x) = (1 + |x|^{\frac{p}{p-1}})^{-\frac1{\al-1}}$, 

\item (ii) for $\al < 1$,
\begin{equation}\label{eq:sharpGN1}
\|u\|_{L^{\al (p-1)+1}(\R^n)} \leq GN(n,p,\al) \|\na u\|_{L^p(\R^n)}^{\theta} \|u\|_{L^{\al p}(\R^n)}^{1-\theta},\quad u\in C^\infty_0(\R^n),
\end{equation}
with $\theta =\frac{n(1-\al)}{(\al p + 1 -\al)(n-\al(n-p))}$, the sharp constant $GN(n,p,\al)$ is given by
\[
GN(n,p,\al) = \lt(\frac{p-q}{p\sqrt{\pi}}\rt)^{\theta}\lt(\frac{pq}{n(p-q)}\rt)^{\frac{\theta}p} \lt(\frac{pq}{\de}\rt)^{\frac{1-\theta}{\al p}} \lt(\frac{\Gamma(\frac{p-1}p \frac{\de}{p-q}+1) \Gamma(\frac n2+1)}{\Gamma(q\frac{p-1}{p-q}+1) \Gamma(n \frac{p-1}p+1)}\rt)^{\frac{\theta}n}, 
\]
with $q = \al(p-1)+ 1$, $\de =np - q(n-p) >0$, and an extremal functions is given the function $u(x) = (1 - |x|^{\frac{p}{p-1}})_+^{\frac1{1-\al}}$, where $a_+ = \max\{a,0\}$ denotes the positive part of a number $a$. 
\end{description}
We refer the reader to the paper of Cordero-Erausquin, Nazaret and Villani \cite{CNV} for a completely different proof of the sharp Sobolev inequality \eqref{eq:sharpSobolev} and the sharp Gagliardo--Nirenberg inequality \eqref{eq:sharpGN} and \eqref{eq:sharpGN1} by using the mass transportation method.

Finally, we recall the sharp Morrey--Sobolev inequality in $\R^n$. Given $p >n$, then for any function $u\in C_0^\infty(\R^n)$, the following inequality holds
\begin{equation}\label{eq:sharpMS}
\|u\|_{L^\infty(\R^n)} \leq b_{n,p} \Vol(\text{\rm supp}\,u)^{\frac{p-n}{np}} \|\na u\|_{L^p(\R^n)},
\end{equation}
here $\Vol$ denotes the Lebesgue measure of any measurable subset of $\R^n$, the sharp constant $b_{n,p}$ is given by
\[
b_{n,p} = n^{-\frac1p} \si_n^{-\frac1n}\lt(\frac{p-1}{p-n}\rt)^{\frac{p-1}p},
\]
and an extremal function is given by $u(x) = (1 -|x|^{\frac{p-n}{p-1}})_+$. For more about this inequality, the reader may consult \cite{Talentib}.

Let us go to prove Theorem \ref{Mainresult} and Theorem \ref{Maintheorem}.

\begin{proof}[Proof of Theorem \ref{Mainresult}]
Suppose $u$ is a function in $W^{1,p}(\H^n)$. Let us define two new functions $u_g^\sharp$ and $u_e^\sharp$ by \eqref{eq:usharpg} and \eqref{eq:usharpe} respectively. Theorem \ref{sosanh} implies
\[
\|\na_g u_g^\sharp\|_{L^p(\H^n)}^p - \lt(\frac{n-1}p\rt)^p \|u_g^\sharp\|_{L^p(\H^n)}^p \geq \|\na u_e^\sharp\|_{L^p(\R^n)}^p,
\]
for any $p\geq 2$ if $n=2$, and for any $p\geq \frac{2n}{n-1}$ if $n\geq 3$. Note that $\|u_g^\sharp\|_{L^p(\H^n)} = \|u\|_{L^p(\H^n)}$. Hence, applying P\'olya--Szeg\"o principle \eqref{eq:PSprinciple} and equality \eqref{eq:equalkey}, we get
\begin{equation}\label{eq:sosanh1}
\|\na_g u\|_{L^p(\H^n)}^p - \lt(\frac{n-1}p\rt)^p \|u\|_{L^p(\H^n)}^p \geq \|\na u_e^\sharp\|_{L^p(\R^n)}^p.
\end{equation}
Suppose that $n\geq 4$ and $\frac{2n}{n-1} \leq p < n$. Using the sharp Sobolev inequality \eqref{eq:sharpSobolev} for $u_e^\sharp$ and using the equality $\|u_e^\sharp\|_{L^{p^*}(\R^n)} = \|u\|_{L^{p^*}(\H^n)}$, we obtain the desired inequality \eqref{eq:LpHSM}.

Suppose $u\in W^{1,p}(\H^n)$ such that the equality in \eqref{eq:LpHSM} holds for $u$. Let $v= u^*$ the decreasing rearrangement function of $u$ on $[0,\infty)$, and define $u_g^\sharp$ and $u_e^\sharp$ by \eqref{eq:usharpg} and \eqref{eq:usharpe} respectively. Since the equality in \eqref{eq:LpHSM} holds for $u$, we must have $\|\na_g u\|_{L^p(\H^n)} = \|\na_g u_g^\sharp\|_{L^p(\H^n)}$ and
\[ 
\|\na_g u_g^\sharp\|_{L^p(\H^n)}^p - \lt(\frac{n-1}p\rt)^p \|u_g^\sharp\|_{L^p(\H^n)}^p = \|\na u_e^\sharp\|_{L^p(\R^n)}^p.
\]
From the proof of Theorem \ref{sosanh}, we see that the second condition implies
\[
\int_0^\infty |((v(s) s^{\frac1p})'|^p s^{p-1} ds =0.
\]
Thus, we have $v(s) = c s^{-\frac1p}$ for some constant $c\in \R$. However, $\int_0^\infty v(s)^p ds < \infty$ which forces $c =0$. This finishes our proof of Theorem \ref{Mainresult}.
\end{proof}

\begin{proof}[Proof of Theorem \ref{Maintheorem}]
The proof of Theorem \ref{Maintheorem} is similar with the one of Theorem \ref{Mainresult}. Suppose that $n\geq 4$ and $\frac{2n}{n-1} \leq p < n$. By \eqref{eq:sosanh1}, we can apply the sharp Gagliardo--Nirenberg inequalities \eqref{eq:sharpGN} and \eqref{eq:sharpGN1} for function $u_e^\sharp$ to derive the desired inequalities \eqref{eq:GN1} and \eqref{eq:GN2} as done for the inequality \eqref{eq:LpHSM}, respectively.

Suppose that $n\geq 2$ and $p >n$. We note that \eqref{eq:sosanh1} still holds under this condition. We now can apply the sharp Morrey--Sobolev inequality \eqref{eq:sharpMS} for $u_e^\sharp$ to yield the inequality \eqref{eq:HMS} with remark that $\|u_e^\sharp\|_{L^\infty(\R^n)} = \|u\|_{L^\infty(\H^n)}$ and $\Vol(\text{\rm supp}\, u_e^\sharp) = V(\text{\rm supp}\, u)$.
\end{proof}

We conclude this section by a remark in the case $\al \to 1^+$ of the inequality \eqref{eq:GN1}. Taking the limit as done in \cite{DDb}, we obtain the following Poincar\'e--Sobolev logarithmic inequality in $\H^n$ which is an extension of the optimal Euclidean $L^p-$Sobolev logarithmic inequality \cite{DDa,DDb} to the hyperbolic spaces. Suppose $u \in W^{1,p}(\H^n)$ with $\|u\|_{L^p(\H^n)} =1$, it holds
\begin{equation}\label{eq:logPS}
\int_{\B^n} |u|^p \ln (|u|^p) dV \leq \frac{n}{p} \ln\lt(\mathcal L_{n,p} \int_{\B^n} \lt(|\na_g u|_g^p - \lt(\frac{n-1}n\rt)^p |u|^p\rt)dV\rt)
\end{equation}
for any $n \geq 4$ and $\frac{2n}{n-1} \leq p < n$ with the constant $\mathcal L_{n,p}$ is given by
\[
\mathcal L_{n,p} = \frac{p}{n} \lt(\frac{p-1}e\rt)^{p-1} \pi^{\frac p2}\lt[\frac{\Gamma(\frac n2 +1)}{\Gamma(n \frac{p-1}p +1)}\rt]^{\frac pn}.
\]


\section{Other Sobolev inequalities in the hyperbolic spaces}
In this section, we establish several Sobolev inequalities in the hyperbolic spaces $\H^n$. These inequalities generalize the results of Mugelli and Talenti \cite{MT} in $\H^2$ to higher dimensional spaces. The main results of this section read as follows.

\begin{theorem}\label{MTHn}
Let $n \geq 2$ and $p\in [1,\infty)$. Then for any function $u\in W^{1,p}(\H^n)$, the following inequalities holds.

\item (i) If $p=1$ then
\begin{equation}\label{eq:MTp}
\lt(n-1\rt)^n \lt(\int_{\B^n} |u| dV\rt)^{n} + S(n,1)^n \lt(\int_{\B^n} |u|^{\frac{n}{n-1}} dV\rt)^{n-1} \leq \lt(\int_{\B^n} |\na_g u|_g dV\rt)^{n}.
\end{equation}

\item (ii) If $1 < p < n$ then
\begin{equation}\label{eq:MTp1}
\lt(\frac{n-1}p\rt)^n \lt(\int_{\B^n} |u|^p dV\rt)^{\frac np} + S(n,p)^n \lt(\int_{\B^n} |u|^{\frac{np}{n-p}} dV\rt)^{\frac{n-p}p} \leq \lt(\int_{\B^n} |\na_g u|_g^p dV\rt)^{\frac np}.
\end{equation}

\item (iii) If $n < p < \infty$ then
\begin{equation}\label{eq:MTp2}
\sup_{x\in \B^n} |u(x)| \leq C(n,p) \lt(\int_{\B^n} |\na_g u|_g^p dV\rt)^{\frac1p},
\end{equation}
with 
\[
C(n,p) =(2^{n-1} n\si_n)^{-\frac1p}\lt(\frac{\Gamma(\frac{p-n}{2(p-1)})\Gamma(\frac{n-1}{p-1})}{\Gamma(\frac{p+n-2}{2(p-1)})}\rt)^{\frac{p-1}p}.
\]
Furthermore, the equality holds in \eqref{eq:MTp2} if $u(x) = v(V(B_g(0,\rho(x))))$ with 
\begin{equation}\label{eq:vf}
v(r) = c\int_r^\infty \lt(\sinh \Phi^{-1}\lt(\frac s{\si_n}\rt)\rt)^{-\frac {p(n-1)}{p-1}}ds.
\end{equation}
\end{theorem}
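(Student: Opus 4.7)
The plan is to reduce each inequality to a one-dimensional problem on the profile $v = u^*$. First I would replace $u$ by its symmetric decreasing rearrangement $u_g^\sharp$, costing nothing by Pólya--Szegő \eqref{eq:PSprinciple}; the radial formula \eqref{eq:gradientHn} then gives $\|\nabla_g u\|_p^p \geq (n\si_n)^p\int_0^\infty(-v'(s))^p h(s)^{p(n-1)}\,ds$ with $h(s) = \sinh\Phi^{-1}(s/\si_n)$, while $\|u\|_q^q = \int_0^\infty v(s)^q\,ds$ for any $q$.

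I would handle part (iii) first as it is the cleanest. Since $\sup|u| = v(0) = \int_0^\infty(-v')\,ds$, splitting $(-v')$ as $\bigl[(-v')\,h^{n-1}\bigr]\cdot h^{-(n-1)}$ and applying Hölder with exponents $p$ and $p'=p/(p-1)$ gives
\[
v(0)^p \leq (n\si_n)^{-p}\|\nabla_g u\|_p^p\Bigl(\int_0^\infty h(s)^{-(n-1)p'}\,ds\Bigr)^{p-1}.
\]
The changes of variable $s = \si_n\Phi(t)$ and $z = \tanh^2(t/2)$ reduce the dual integral to $n\si_n\cdot 2^{-(n-1)/(p-1)}B\bigl(\tfrac{p-n}{2(p-1)},\tfrac{n-1}{p-1}\bigr)$, which converges precisely when $p > n$ and produces the stated $C(n,p)$. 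The Hölder equality criterion forces $(-v')^p \propto h^{-p(n-1)/(p-1)}$, which gives the extremal \eqref{eq:vf}.

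For part (ii) in the range $\tfrac{2n}{n-1} \leq p < n$, I would combine Theorem \ref{sosanh} with the sharp Euclidean Sobolev inequality applied to $u_e^\sharp$ (using $\|u_e^\sharp\|_{p^*} = \|u\|_{p^*}$) to obtain $\|\nabla_g u\|_p^p \geq S(n,p)^p\|u\|_{p^*}^p + \bigl(\tfrac{n-1}{p}\bigr)^p\|u\|_p^p$. Since $n/p \geq 1$, the elementary inequality $(a+b)^{n/p} \geq a^{n/p} + b^{n/p}$ then yields \eqref{eq:MTp1} at once. The remaining subrange $1 < p < \tfrac{2n}{n-1}$ lies outside the scope of Theorem \ref{sosanh} and would demand a coarea-type argument paralleling part (i).

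Part (i) I would handle by the Mugelli--Talenti strategy. Coarea and the sharp hyperbolic isoperimetric inequality give $\|\nabla_g u\|_1 \geq \int_0^\infty \phi(t)\,dt$, where $\phi(t) = n\si_n(\sinh r(t))^{n-1}$ and $\si_n\Phi(r(t)) = \mu(t) = V(\{|u|>t\})$. The crucial step is the sharpened pointwise isoperimetric
\[
\phi(t)^n \geq (n-1)^n\mu(t)^n + S(n,1)^n\mu(t)^{n-1},
\]
equivalent to $(\sinh r)^{n(n-1)} \geq \bigl(\tfrac{n-1}{n}\bigr)^n\Phi(r)^n + \Phi(r)^{n-1}$; this reduces to Bol's identity when $n = 2$ and for $n \geq 3$ would be established by showing the difference vanishes at $r = 0$ with non-negative derivative via nested differentiation. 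Factoring as $\phi^n \geq \mu^{n-1}((n-1)^n\mu + S(n,1)^n)$ and invoking the Mahler--Minkowski inequality $\prod_{i=1}^n(x_i+c)^{1/n} \geq (\prod x_i)^{1/n} + c$ with $x_i = (n-1)^n\mu(t_i)$ and $c = S(n,1)^n$ produces the multilinear bound $\prod_i \phi(t_i) \geq (n-1)^n\prod\mu(t_i) + S(n,1)^n(\prod\mu(t_i))^{(n-1)/n}$. Integrating over $[0,\infty)^n$ by Fubini, and combining with the layer-cake identity $\|u\|_{n/(n-1)}^{n/(n-1)} = \tfrac{n}{n-1}\int t^{1/(n-1)}\mu\,dt$ and the one-dimensional estimate $(\int \mu^{(n-1)/n}\,dt)^n \geq (\tfrac{n}{n-1})^{n-1}(\int t^{1/(n-1)}\mu\,dt)^{n-1}$ (which itself follows from the radial Euclidean $L^1$-Sobolev applied to $V(m) = v(m^{1/n})$), converts the last term into $S(n,1)^n\|u\|_{n/(n-1)}^n$. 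The main obstacle will be verifying the sharpened pointwise isoperimetric estimate, whose proof by differentiation becomes computationally cumbersome for large $n$.
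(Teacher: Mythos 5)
Your part (ii) only covers the range $\frac{2n}{n-1}\leq p<n$, whereas the theorem asserts \eqref{eq:MTp1} for every $1<p<n$ and every $n\geq 2$; for the remaining subrange $1<p<\frac{2n}{n-1}$ you defer to an unspecified ``coarea-type argument'', and this is a genuine gap. The paper closes that range with a device you are missing: rather than applying the pointwise bound of Lemma \ref{lowerboundk} with exponent $p$ (which genuinely fails for $p<\frac{2n}{n-1}$, as the asymptotic analysis after the lemma shows), it writes
$\bigl(\sinh \Phi^{-1}(s/\si_n)\bigr)^{p(n-1)}=\bigl[\bigl(\sinh \Phi^{-1}(s/\si_n)\bigr)^{n(n-1)}\bigr]^{p/n}=\bigl[k_{n,n}(s/\si_n)+(s/\si_n)^{n-1}\bigr]^{p/n}$
and invokes the lemma with exponent $n$, which is admissible for all $n\geq 2$ since $n\geq\frac{2n}{n-1}$. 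The concave power $p/n\in(0,1)$ is then handled via $(a+b)^{\al}=\sup_{t\in[0,1]}\bigl(t^{1-\al}a^{\al}+(1-t)^{1-\al}b^{\al}\bigr)$: for each fixed $t$ the two resulting integrals are controlled by the one-dimensional Hardy inequality $\int_0^\infty|v'(s)|^ps^p\,ds\geq p^{-p}\int_0^\infty|v(s)|^p\,ds$ and by the sharp Euclidean Sobolev inequality applied to $u_e^\sharp$, respectively, and taking the supremum over $t$ reassembles the sum of $n/p$-th powers, giving \eqref{eq:MTp1} in the full range. Part (i) then follows by letting $p\downarrow 1$ in \eqref{eq:MTp1}, with no coarea argument needed.

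As for the rest: your part (iii) coincides with the paper's proof (H\"older against $l(s)^{-1}$, the Beta-integral evaluation, and the equality case from the H\"older criterion), and is fine. Your part (i), via the coarea formula, the hyperbolic isoperimetric inequality, the Minkowski product inequality $\prod_i(x_i+c)^{1/n}\geq(\prod_i x_i)^{1/n}+c$, and Fubini, is a genuinely different route in the spirit of Mugelli--Talenti and appears workable; note, however, that the ``sharpened pointwise isoperimetric'' estimate you flag as the main computational obstacle is exactly Lemma \ref{lowerboundk} evaluated at $p=n$ (namely $k_{n,n}(\Phi(r))\geq(\tfrac{n-1}{n})^n\Phi(r)^n$), so no new nested differentiation is required. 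Your restricted part (ii) argument via Theorem \ref{sosanh} plus superadditivity of $x\mapsto x^{n/p}$ is correct where it applies, and there it in fact recovers \eqref{eq:MTp1} from the stronger inequality \eqref{eq:LpHSM}; but as written the proposal does not prove the theorem in the stated generality.
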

Obviously, $(a+b)^{\al} \leq a^\al + b^\al$ for any $\al \in [0,1]$ and $a, b\geq 0$. As a consequence, the inequality \eqref{eq:MTp1} is weaker than the inequality \eqref{eq:LpHSM}. However,the inequality \eqref{eq:MTp1} is valid for any $n\geq 2$ and $1 < p < n$.

\begin{proof}
The part (i) follows from part (ii) by letting $p \downarrow 1$. We next prove part (ii). Let $u \in W^{1,p}(\H^n)$, we define two new functions $u_g^\sharp$ and $u_e^\sharp$ by \eqref{eq:usharpg} and \eqref{eq:usharpe} respectively. Denote $v =u^*$ and recall the function $\Phi$ from \eqref{eq:Phi}. By \eqref{eq:gradientHn}, we have
\begin{align*}
\int_{\B^n} |\na_g u^\sharp_g|_g^p dV &= (n\si_n)^p \int_0^\infty |v'(s)|^p \lt(\sinh \Phi^{-1}\lt(\frac{s}{\si_n}\rt)\rt)^{p(n-1)} ds\\
&=(n\si_n)^p \int_0^\infty |v'(s)|^p \lt(\lt(\sinh \Phi^{-1}\lt(\frac{s}{\si_n}\rt)\rt)^{n(n-1)}\rt)^{\frac pn} ds\\
&=(n\si_n)^p \int_0^\infty |v'(s)|^p \lt( k_{n,n}\lt(\frac s{\si_n}\rt) + \lt(\frac s{\si_n}\rt)^{n-1}\rt)^{\frac pn} ds\\
&\geq (n\si_n)^p \int_0^\infty |v'(s)|^p \lt( \lt(\frac{n-1}n\rt)^n\lt(\frac s{\si_n}\rt)^n + \lt(\frac s{\si_n}\rt)^{n-1}\rt)^{\frac pn} ds,
\end{align*}
here we use Lemma \ref{lowerboundk} to bound $k_{n,n}$ from below. It is easy to see that for $\al \in (0,1)$, it holds
\begin{equation}\label{eq:a1}
(a+b)^\al = \sup_{t\in [0,1]} (t^{1-\al} a^\al + (1-t)^{1-\al} b^\al).
\end{equation}
Consequently, for any $t\in [0,1]$, we get
\begin{align*}
\int_{\B^n} |\na_g u^\sharp_g|_g^p dV &\geq t^{1-\frac pn}(n-1)^p \int_0^\infty |v'(s)|^p s^p ds + (1-t)^{1-\frac pn} (n\si_n)^p \int_0^\infty |v'(s)|^p \lt(\frac s{\si_n}\rt)^{\frac{p(n-1)}n} ds\\
&\geq t^{1-\frac pn} \frac{(n-1)^p}{p^p} \int_0^\infty |v(s)|^p ds + (1-t)^{1-\frac pn}\|\na u_e^\sharp\|_{L^p(\R^n)}^p\\
&\geq t^{1-\frac pn} \frac{(n-1)^p}{p^p} \|u_g^\sharp\|_{L^p(\H^n)}^p + (1-t)^{1-\frac pn} S(n,p)^p \|u_e^\sharp\|_{L^{\frac{np}{n-p}}(\R^n)}^p,
\end{align*}
here we use the Hardy inequality in $[0,\infty)$ for the first inequality, and the sharp Sobolev inequality for the second inequality. Taking the supremum over $t \in [0,1]$ and using again \eqref{eq:a1} and the fact $\|u\|_{L^p(\H^n)} = \|u^\sharp_g\|_{L^p(\H^n)}$ and $\|u\|_{L^{p^*}(\H^n)} = \|u_e^\sharp\|_{L^{p^*}(\R^n)}$ with $p^* = \frac{np}{n-p}$, we obtain
\[
\int_{\B^n} |\na_g u^\sharp_g|_g^p dV \geq \lt[\lt(\frac{n-1}p\rt)^n \|u\|_{L^p(\H^n)}^n + S(n,p)^n \|u\|_{L^{\frac{np}{n-p}}(\H^n)}^n\rt]^{\frac pn},
\]
which implies \eqref{eq:MTp1} by the P\'olya--Szeg\"o principle in $\H^n$.

We next prove part (iii). Denote
\[
l(s) = \lt[\sinh \Phi^{-1} \lt(\frac s{\si_n}\rt)\rt]^{n-1}, \quad s \geq 0,
\]
where $\Phi$ is defined by \eqref{eq:Phi}. It is easy to check that $l(s) \sim s^{\frac{n-1}n}$ as $s \to 0$ and $l(s) \sim s$ as $s\to \infty$. Consequently, we get 
\[
\int_0^\infty l(s)^{-\frac p{p-1}} ds < \infty.
\]
Moreover, making the change $s = \si_n \Phi(t)$ with $ds =n\si_n (\sinh t)^{n-1}$, we have
\begin{align*}
\int_0^\infty l(s)^{-\frac p{p-1}} ds&= \int_0^\infty \lt[\sinh \Phi^{-1} \lt(\frac s{\si_n}\rt)\rt]^{-\frac{p(n-1)}{p-1}} ds = n\si_n \int_0^\infty (\sinh t)^{-\frac{n-1}{p-1}} dt\\
&= n\si_n 2^{\frac{n-p}{p-1}} \frac{\Gamma(\frac{p-n}{p-1})\Gamma(\frac{n-1}{2(p-1)})}{\Gamma(\frac{2p-n-1}{2(p-1)}}.
\end{align*}
Using the identity $\Gamma(x) \Gamma(x + \frac12) = 2^{1-2x} \Gamma(2x)$, we get
\begin{equation}\label{eq:cc}
\int_0^\infty l(s)^{-\frac p{p-1}} ds= n\si_n 2^{-\frac{n-1}{p-1}} \frac{\Gamma(\frac{p-n}{2(p-1)})\Gamma(\frac{n-1}{p-1})}{\Gamma(\frac{p+n-2}{2(p-1)})}.
\end{equation}
Denote $v =u^*$, we have $\lim_{s\to\infty} v(s) =0$, and hence
\[
v(r) = \int_r^\infty v'(s) ds = \int_{r}^\infty v'(s) l(s) l(s)^{-1} ds.
\]
Thank to H\"older inequality, we get
\begin{align*}
\sup_{x\in \H^n} |u(x)| = v(0) &\leq \lt(\int_0^\infty |v'(s)|^p l(s)^{p} ds\rt)^{\frac1p} \lt(\int_0^\infty l(s)^{-\frac p{p-1}} ds\rt)^{\frac{p-1}p}\\
&= \frac1{n\si_n} \lt(\int_0^\infty l(s)^{-\frac p{p-1}} ds\rt)^{\frac{p-1}p}\lt((n\si_n)^p\int_0^\infty |v'(s)|^p l(s)^{p} ds\rt)^{\frac1p}\\
&=C(n,p) \, \|\na u_g^\sharp\|_{L^p(\H^n)},
\end{align*}
here the second equality comes from \eqref{eq:gradientHn} and \eqref{eq:cc}. This proves \eqref{eq:MTp2}.

To check the sharpness of $C(n,p)$, we see that if $u(x) = v(V(B_g(0,\rho(x))))$ with $v$ defined by \eqref{eq:vf}, then $u^* = v$. Hence $\sup_{x\in \H^n} |u(x)| =v(0)$. Moreover, for such a choice of function $v$, we have equality in the H\"older inequality above. This proves the sharpness of $C(n,p)$ and the equality holds for this function $u$. 
\end{proof}

\section*{Acknowledgements}
The author would like to thank two anonymous reviewers for their useful and constructive comments and suggestions which improve the presentation of this paper.

\end{document}